\theoremstyle{plain}
\newtheorem{thm}{Theorem}
\newtheorem*{nthm}{Theorem}
\newtheorem{lem}{Lemma}
\newtheorem{prop}{Proposition}
\theoremstyle{definition}
\newtheorem*{ques}{Question}
\theoremstyle{remark}
\begin{document}
\title[Non-locally (OB) example]{An automorphism group of an $ \omega $-stable structure that is not locally (OB)}
\author{Joseph Zielinski}
\address{Department of Mathematics, Statistics, and Computer Science,
University of Illinois at Chicago,
322 Science and Engineering Offices (M/C 249),
851 S. Morgan Street,
Chicago, IL 60607-7045}
\email{zielinski.math@gmail.com}
\urladdr{http://math.uic.edu/~zielinski}
\begin{abstract}
We observe that there is an example of an automorphism group of a model of an $ \omega $-stable theory---in fact, the prime model of an uncountably-categorical theory---that is not locally (OB), answering a question of C. Rosendal.
\end{abstract}
\maketitle

\section{Introduction}
Towards extending the techniques of geometric group theory to all topological groups, C. Rosendal, in \cite{2014arXiv1403.3106R}, identifies, for a general topological group, the appropriate notion of ``boundedness''.
The sets with this property play the role of the compact subsets of a locally-compact group and norm-bounded subsets of (the additive group of) a Banach space---and indeed, coincide with these examples for the above classes of groups.
Here, the sets with the \emph{relative property (OB)} are those that are inexorably bounded, in the sense that they take finite diameter with respect to \emph{every} continuous, left-invariant pseudometric on the group.

Recall that a \emph{coarse structure} on a set, $ X $, is any family of subsets of $ X^{2} $ extending the powerset of the diagonal and closed under subsets, unions, inverses, and compositions of relations. For example, a coarse structure naturally arising from a metric space, $ (X,d) $, consists of those sets $ E \subseteq X^{2} $ such that $ \sup \{d(x,y) \mid (x,y) \in E\} $ is finite (see \cite{MR2007488}).
The family of sets with the relative property (OB) forms an ideal, stable under the group operations, and thereby induces a left-invariant coarse structure on the group generated by the entourages, $ \{(x,y) \mid x^{-1}y \in A  \} $, as $ A $ varies over relatively (OB) sets.

Associated to this concept are several attributes that a given topological group may possess. A group is \emph{locally (OB)} if there is an open neighborhood of the identity element with the property (OB). For a broad class of topological groups, this completely coincides with the situation where the above coarse structure may be given by a metric. Additionally, the group simply \emph{has the property (OB)} when every subset has the relative property (OB) as above, i.e., when the group has finite diameter with respect to every continuous, left-invariant pseudometric. These are the groups for which the above coarse structure is trivial.

Much of the motivation for better understanding ``large'' topological groups is that they often arise as transformations of important mathematical objects, e.g., homeomorphism groups of compact topological spaces, isometry groups of metric spaces, diffeomorphism groups of manifolds, and automorphism groups of countable structures in model theory. The coarse geometry of the groups from this latter class received a more thorough treatment in \cite{2014arXiv1403.3107R}. One of the main results of that paper was,

\begin{nthm}[Rosendal]
If $ \mathcal{M} $ is the countable, saturated model of an $ \omega $-stable theory, then $ \operatorname{Aut}(\mathcal{M}) $ has the property (OB).
\end{nthm}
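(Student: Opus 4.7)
The plan is to apply Rosendal's characterization of property (OB) for Polish groups: $G$ has (OB) if and only if for every open neighborhood $V$ of the identity, there exist a finite set $F \subseteq G$ and an integer $k \geq 1$ with $G = (FV)^{k}$. Since a basis of neighborhoods of the identity in $\operatorname{Aut}(\mathcal{M})$ consists of the pointwise stabilizers $G_{\bar{a}}$ of finite tuples $\bar{a} \subseteq \mathcal{M}$, the task reduces to showing that for every such $\bar{a}$ there is a finite $F \subseteq G$ with $G = (FG_{\bar{a}})^{3}$.

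Fix $\bar{a}$ and let $p = \operatorname{tp}(\bar{a}/\emptyset)$. Two ingredients are crucial: the saturation of $\mathcal{M}$, which ensures that every type over a finite parameter set is realized and that $G$-orbits on tuples coincide with complete types over $\emptyset$; and the forking calculus of $\omega$-stability, which supplies finite Morley degree. The complete $2$-types $\operatorname{tp}(\bar{a},\bar{a}')$ with $\bar{a},\bar{a}' \models p$ and $\bar{a}'$ independent from $\bar{a}$ correspond to non-forking extensions of $p$ to $\bar{a}$, of which there are at most $d = $ Morley degree of $p$, hence only finitely many. Let $\bar{a}'_{1},\ldots,\bar{a}'_{m}$ be representatives, pick $\sigma_{i} \in G$ with $\sigma_{i}\bar{a} = \bar{a}'_{i}$, and set $F = \{1\} \cup \{\sigma_{i}^{\pm 1} : 1 \leq i \leq m\}$.

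The core computation is then as follows. For any $\bar{b} \in G \cdot \bar{a}$, use saturation to produce $\bar{c} \models p$ independent from $\bar{a}\bar{b}$. Both $(\bar{a},\bar{c})$ and $(\bar{b},\bar{c})$ are then independent pairs of realizations of $p$, and hence must match some $(\bar{a},\bar{a}'_{i})$ and $(\bar{a},\bar{a}'_{j})$, respectively, up to the $G$-action. Thus there exist $h \in G_{\bar{a}}$ with $h\sigma_{i}\bar{a} = \bar{c}$, and $g \in G$ with $g\bar{a} = \bar{b}$ and $g\sigma_{j}\bar{a} = \bar{c}$. The equation $g\sigma_{j}\bar{a} = h\sigma_{i}\bar{a}$ forces $g = h\sigma_{i} k \sigma_{j}^{-1}$ for some $k \in G_{\bar{a}}$, an element which lies in $(FG_{\bar{a}})^{3}$ by the parsing $h = 1 \cdot h$, $\sigma_{i}k = \sigma_{i} \cdot k$, $\sigma_{j}^{-1} = \sigma_{j}^{-1} \cdot 1$.

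The main obstacle I expect is the careful bookkeeping of these finitely many generic $2$-types: one must verify that, in an $\omega$-stable theory, independent pairs of realizations of $p$ do factor through finitely many $G$-orbits. This reduces to the finiteness of Morley degree and is the distinctive contribution of $\omega$-stability (as opposed to merely stability or superstability). A minor secondary point is the possible non-stationarity of $p$ over $\emptyset$; this is absorbed into the Morley degree bound used above, or alternatively handled by arranging from the start that $\bar{a}$ contains a canonical base for $p$.
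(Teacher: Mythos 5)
The paper does not actually prove this statement: it is quoted as a theorem of Rosendal and cited to his paper on the coarse geometry of automorphism groups, so there is no in-paper proof to compare against. Your argument is a correct reconstruction of (essentially) Rosendal's own proof: the reduction to the criterion $G=(FV)^{k}$ with $V=G_{\bar{a}}$ a pointwise stabilizer, the use of saturation to realize a $\bar{c}\models p$ independent from $\bar{a}\bar{b}$ and to identify orbits with types, and the finiteness of the relevant independent $2$-types via finite multiplicity in an $\omega$-stable theory are exactly the ingredients there. The one bookkeeping point to tighten is the last step: your factorization exhibits one particular $g$ with $g\bar{a}=\bar{b}$ inside $(FV)^{3}$, while an arbitrary such $g$ differs from it by a right factor in $G_{\bar{a}}$; this is absorbed by writing $\sigma_{j}^{-1}\cdot k'$ in place of $\sigma_{j}^{-1}\cdot 1$ in the final factor, so the conclusion $G=(FG_{\bar{a}})^{3}$ does stand.
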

Recall that a theory is \emph{$ \omega $-stable} when there are only countably-many complete types over countable parameter sets, and that a countable model is \emph{saturated} when it realizes all types over finite parameter sets. This theorem and its proof led to the conjecture,

\begin{ques}[Rosendal]
If $ \mathcal{M} $ is any model of an $ \omega $-stable theory, must $ \operatorname{Aut}(\mathcal{M}) $ be locally (OB)?
\end{ques}

Here we answer this question is the negative, namely we show,
\begin{thm} \label{counterexample}
There is a countable structure, $ \mathcal{M} $, for which $ \operatorname{Th}(\mathcal{M}) $ is uncountably-categorical, $ \mathcal{M} $ is its prime model, and $ \operatorname{Aut}(\mathcal{M}) $ is not locally (OB).
\end{thm}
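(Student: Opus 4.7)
My plan is to exhibit a specific countable structure $ \mathcal{M} $ and verify in turn uncountable categoricity, primality, and failure of local (OB).

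I expect $ \mathcal{M} $ to be built as a two-sorted structure $ (I, \tilde{I}; \pi) $: a base sort $ I $ carrying a strongly minimal structure with a nontrivial prime model (so the theory is not $ \aleph_{0} $-categorical), together with a cover sort $ \tilde{I} $ projecting to $ I $ via $ \pi $ and equipped with enough auxiliary structure to give each $ \pi $-fiber a first-order encoded $ \mathbb{Z} $-action. A natural candidate for the base is $ I = \overline{\mathbb{F}}_{p} $, the prime model of $ \mathrm{ACF}_{p} $. I would verify $ \omega $-stability by counting types over a finite parameter set (a type decomposes into its $ \mathrm{ACF}_{p} $-content plus finitely many integer $ \mathbb{Z} $-shift data in fibers, giving countably many) and uncountable categoricity via the Baldwin--Lachlan criterion of no Vaughtian pair; failure of $ \aleph_{0} $-categoricity is witnessed by the existence of countable models of each transcendence degree of $ I $. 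Primality of $ \mathcal{M} $ is the assertion of atomicity: algebraic tuples in $ I $ have types isolated by their minimal polynomials, and tuples of fiber-mates in $ \tilde{I} $ by quantifier-free formulas relating them through the cover structure.

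The main step is showing $ G = \operatorname{Aut}(\mathcal{M}) $ is not locally (OB). Structurally, $ G $ contains a subgroup isomorphic to $ \mathbb{Z}^{\overline{\mathbb{F}}_{p}} $ that encodes the independent $ \mathbb{Z} $-shift choices in the fibers, while $ \widehat{\mathbb{Z}} = \operatorname{Gal}(\overline{\mathbb{F}}_{p} / \mathbb{F}_{p}) $ acts on the coordinates by Galois conjugation. For any finite $ F \subset \mathcal{M} $ with base support $ A = (F \cap I) \cup \pi(F \cap \tilde{I}) $, I would select a Galois orbit $ O \subset \overline{\mathbb{F}}_{p} $ disjoint from $ A $---such an orbit exists because $ \overline{\mathbb{F}}_{p} $ contains Galois orbits of arbitrarily large degree---and define
\[
\| g \|_{O} = \sum_{a \in O} | t_{a}(g) |, \qquad d_{O}(g, h) = \| g^{-1} h \|_{O},
\]
where $ t_{a}(g) \in \mathbb{Z} $ is the $ \mathbb{Z} $-shift of $ g $ in the fiber above $ a $. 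Galois-invariance of $ O $ makes $ \| \cdot \|_{O} $ symmetric and subadditive, so $ d_{O} $ is left-invariant; finiteness of $ O $ makes $ d_{O} $ continuous for the pointwise-convergence topology on $ G $; and since arbitrary shifts at points of $ O $ are unconstrained within $ \operatorname{Stab}_{G}(F) $, one has $ \operatorname{diam}_{d_{O}} \operatorname{Stab}_{G}(F) = \infty $. As $ F $ was arbitrary, no basic open neighborhood of the identity has property (OB).

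The principal obstacle I anticipate is the first-order encoding of the $ \mathbb{Z} $-cover structure: the na\"ive axiomatization of ``each fiber is a single $ \mathbb{Z} $-orbit'' is infinitary, so ensuring genuine uncountable categoricity (and not merely $ \omega $-stability) requires either introducing auxiliary sorts that encode the shift algebraically, or verifying directly that the specific cover chosen has no Vaughtian pair---without this, one obtains only $ \omega $-stability together with a family of non-isomorphic uncountable models distinguished by the number of fiber-orbits.
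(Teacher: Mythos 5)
Your outline correctly identifies the right shape of example---a strongly minimal base whose prime model is not saturated, covered by fibers each carrying an independent copy of a discrete, non-(OB) group of ``shifts''---and your Galois-orbit pseudometric argument for the failure of local (OB) is essentially sound (it is a hands-on version of the criterion that an infinite product of non-(OB) groups is not locally (OB)). But the obstacle you flag in your final paragraph is not a technicality to be deferred: it is the crux, and as proposed your structure does not satisfy it. If each fiber is merely a $\mathbb{Z}$-torsor (a set with a successor-like function making it a single $\mathbb{Z}$-chain), then ``the fiber is a single chain'' is not first-order, and one obtains elementary extensions that add new $\mathbb{Z}$-chains to a fiber while leaving the base $\overline{\mathbb{F}}_{p}$ untouched. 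This is precisely a Vaughtian pair over the strongly minimal set, so by Baldwin--Lachlan the theory is \emph{not} uncountably categorical (it is $\omega$-stable, as your type count shows, but it has $2^{\aleph_{0}}$ non-isomorphic models of each uncountable cardinality, distinguished by the chain-counts of fibers). Since the theorem asserts uncountable categoricity, this gap is fatal to the proof as written.

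The missing idea---the one the paper takes from Baldwin and Lachlan---is to make each fiber a definable \emph{affine copy of the base itself} rather than a $\mathbb{Z}$-torsor. Concretely, the paper takes the base to be $\mathbb{Q}$ with the ternary operation $F(p,q,r)=(q-p)+r$ and two named constants $0,1$ (so the base is strongly minimal, every base point is $\emptyset$-definable, and the prime model is not saturated), gives each fiber the same affine structure, and, crucially, adds a definable family of bijections $f(a,b,\cdot)$ from the base onto the fiber over $a$, parameterized by points $b$ of that fiber. These parameterized bijections tie every fiber's cardinality and structure to the base, killing all Vaughtian pairs and yielding uncountable categoricity, while still leaving each fiber's translation group $(\mathbb{Q},+)$ acting independently; since the base is pointwise fixed by every automorphism, $\operatorname{Aut}(\mathcal{M})\cong\mathbb{Q}^{\mathbb{Q}}$ with $\mathbb{Q}$ discrete, which is not locally (OB). To repair your version you would need an analogous definable family of bijections between $\overline{\mathbb{F}}_{p}$ (or its additive group) and each fiber---at which point the fiber shift groups become copies of $(\overline{\mathbb{F}}_{p},+)$ rather than $\mathbb{Z}$, and your pseudometric should be rebuilt accordingly; the $\mathbb{Z}$-torsor fibers cannot be salvaged.
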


\section{Preliminaries}

The important notions for the coarse geometry of automorphism groups were introduced above. Let us recall here, from \cite{2014arXiv1403.3106R}, an important source of examples of groups that are not locally (OB).

\begin{lem} \label{products}
A product of groups, $ G = \prod_{i \in I} G_{i} $, is locally (OB) if and only if $ G_{i} $ has the (full) property (OB) for all-but-finitely-many $ i \in I $.
\end{lem}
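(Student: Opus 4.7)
The plan is to exploit the explicit form of basic open neighborhoods of the identity in the product topology: each contains a set $V = \prod_{i \in F} V_i \times \prod_{i \notin F} G_i$ with $F \subseteq I$ finite and $V_i \ni e_i$ open in $G_i$.

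For the forward direction, I would fix a relatively (OB) open neighborhood $U$ of the identity witnessing local (OB), then shrink to a basic open $V \subseteq U$ of the above form. Since subsets of relatively (OB) sets are relatively (OB), $V$ has this property. For each $i \notin F$, the projection $\pi_i \colon G \to G_i$ is a continuous surjective homomorphism, and by construction $\pi_i(V) = G_i$. Pulling back any continuous, left-invariant pseudometric $d_i$ on $G_i$ along $\pi_i$ yields a continuous, left-invariant pseudometric on $G$ under which $V$ has finite diameter, and pushing this forward forces $d_i$ to have finite diameter on $G_i$. Hence $G_i$ has (OB) for every $i \notin F$.

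For the converse, suppose $F \subseteq I$ is finite with $G_i$ having (OB) for every $i \notin F$. The crux is an auxiliary claim I would establish first: an arbitrary product of (OB) groups has (OB). I would handle the finite case by embedding each factor as a subgroup, observing that the restriction of any continuous, left-invariant pseudometric to an embedded factor remains continuous and left-invariant, bounding each factor's diameter, and summing finitely many such bounds via the triangle inequality. The infinite case then reduces to the finite one by continuity of the pseudometric at the identity: a given $d$ admits a basic open set of small $d$-diameter, confining the ``tail'' contribution of all but finitely many coordinates, and the ``head'' is a finite sub-product. Granting the auxiliary claim, $H = \prod_{i \notin F} G_i$ has (OB). I would then choose open neighborhoods $V_i \ni e_i$ in $G_i$ for $i \in F$ and argue that $U = \prod_{i \in F} V_i \times H$ is relatively (OB) by decomposing each $u \in U$ as $u = u_F \cdot u_{F^c}$ with $u_{F^c} \in H$, and splitting $d(e,u) \le d(e, u_F) + d(e, u_{F^c})$ by left-invariance; the second term is bounded by the (OB) of $H$, while the first is bounded by the finite case of the auxiliary claim (choosing the $V_i$ to witness the local (OB) of $G_i$).

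The principal obstacle will be the auxiliary claim that a product of (OB) groups is (OB); its infinite-index instance depends essentially on the structure of the product topology and the continuity of pseudometrics at the identity. Once that is in hand, both implications assemble routinely from left-invariance and the triangle inequality.
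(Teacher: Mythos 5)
Your forward direction is essentially the paper's argument: restrict to a basic open set, note that it surjects onto all but finitely many coordinates, and transport an unbounded invariant (pseudo)metric across that surjection. The paper phrases this contrapositively and, assuming metrizability, adds an unbounded compatible metric on $G_j$ to a compatible metric on $G$; your version, pulling back an arbitrary continuous left-invariant pseudometric along $\pi_i$, works directly from the definition of the relative property (OB) and dispenses with the metrizability hypothesis, so it is if anything slightly more general. Where you genuinely diverge is the converse: the paper does not prove it at all (it defers to Proposition 13 of the cited work and proves only the direction it needs), whereas you supply a full argument via the auxiliary claim that an arbitrary product of (OB) groups has (OB). That claim and your proof of it are correct: continuity of the pseudometric at the identity bounds the tail subgroup, left-invariance makes restriction to each embedded head factor a continuous left-invariant pseudometric on that factor, and the triangle inequality assembles the finitely many bounds.

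One caveat on the converse. As literally stated the ``if'' direction is false: take $I$ a singleton and $G_1$ any group that is not locally (OB); then ``all but finitely many factors have (OB)'' holds vacuously, yet $G=G_1$ is not locally (OB). The correct statement (and the one in Rosendal's Proposition 13) adds the hypothesis that every $G_i$ is locally (OB). Your proof tacitly uses exactly this when you ``choose the $V_i$ to witness the local (OB) of $G_i$'' for the exceptional indices $i\in F$ --- those neighborhoods need to be relatively (OB) for your final triangle-inequality estimate, and nothing in the stated hypotheses provides them. This is a defect of the lemma's formulation rather than of your argument, but you should make the extra hypothesis explicit. Since the paper only ever invokes the ``only if'' direction, this does not affect anything downstream.
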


\begin{proof}
As it suffices for our needs here, we will show just (the contrapositive of) the ``only if'' direction, and for metrizable $ G $. A full proof is found in Proposition 13 of \cite{2014arXiv1403.3106R}. Let $ U \subseteq G $ be open. Then for some $ j \in I $, the projection of $ U $ onto the $ j $th coordinate is all of $ G_{j} $, and $ G_{j} $ does not have the property (OB). Let $ d $ be a compatible, left-invariant metric on $ G_{j} $ of infinite diameter, and let $ \rho $ be any compatible, left-invariant metric on $ G $. Then $ \rho((g_{i})_{i \in I},(h_{i})_{i \in I}) + d(g_{j},h_{j}) $ is a compatible, left-invariant metric on $ G $ assigning infinite diameter to $ U $. Therefore, $ U $ does not have the property (OB), and as it was an arbitrary open set, $ G $ is not locally (OB).
\end{proof}

For our purposes, ``theory'' will mean ``full theory of an infinite structure in a countable language''. Recall that, for an infinite cardinal, $ \kappa $, a theory is \emph{$ \kappa $-categorical} if it has exactly one model, up to isomorphism, of cardinality $ \kappa $, and that by a foundational result of M. Morley, a theory is categorical in one uncountable cardinal if and only if it is categorical in all uncountable cardinals. Such theories are then unambiguously termed \emph{uncountably categorical}. If such a theory is also $ \omega $-categorical, then it is said to be \emph{totally categorical}.

Given a structure, $ \mathcal{M} $, and a tuple $ \bar{a} \in M^{n} $, a formula $ \varphi(x,\bar{a}) $ is \emph{strongly minimal} if it defines an infinite set, and in every elementary extension of $ \mathcal{M} $, every further definable subset is either finite or cofinite. Strongly minimal formulas (and the \emph{strongly minimal sets} they define) play a fundamental role in the structure theory of uncountably-categorical theories.

\begin{nthm}[Baldwin-Lachlan \cite{JSL:9103821}]
If a theory, $ T $, is uncountably categorical, then there is a strongly-minimal $ \varphi(x,\bar{a}) $ (with parameters from the prime model), and models of $ T $ are determined, up to isomorphism, by the minimal cardinality of a set, $ B \subseteq \varphi(\mathcal{M},\bar{a}) $ for which $ \varphi(\mathcal{M},\bar{a}) \subseteq \operatorname{acl}(\bar{a} \cup B) $.
\end{nthm}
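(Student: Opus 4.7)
The plan is to follow the standard three-phase structure theory: first, promote uncountable categoricity to $ \omega $-stability and extract Morley-theoretic tools; second, locate a strongly minimal formula whose parameters can be taken from the prime model; and third, use the pregeometry on the strongly minimal set to show that the dimension of a model is a complete isomorphism invariant.

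First I would show $ T $ is $ \omega $-stable. If $ T $ were not, there would be an uncountable set of complete types over some countable parameter set, from which a Morley-style construction (realizing types along a controlled binary tree) produces $ 2^{\aleph_{1}} $ pairwise non-isomorphic models of size $ \aleph_{1} $, contradicting $ \aleph_{1} $-categoricity. With $ \omega $-stability, every consistent formula carries a Morley rank (an ordinal) and a Morley degree (a positive integer), and the theory admits a unique prime model over any set of parameters. To find a strongly minimal formula, recall that $ \varphi $ is strongly minimal iff $ \operatorname{MR}(\varphi) = \operatorname{MD}(\varphi) = 1 $. Using that uncountably categorical theories have finite Morley rank, I would take a formula of Morley rank $ 1 $ (descending from the generic type of $ x = x $), then pass to an irreducible component of Morley degree $ 1 $ to obtain $ \varphi(x, \bar{b}) $. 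A priori $ \bar{b} $ lives in some elementary extension, but strong minimality depends only on $ \operatorname{tp}(\bar{b}) $, and since the prime model is atomic, the parameters can be relocated to the prime model to yield $ \bar{a} $.

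Next, on $ S := \varphi(\mathcal{M}, \bar{a}) $ I would define $ \operatorname{cl}(X) := \operatorname{acl}(\bar{a} \cup X) \cap S $ for $ X \subseteq S $ and verify the pregeometry axioms. Monotonicity, finite character, and idempotence are immediate from properties of algebraic closure. The exchange axiom—if $ b \in \operatorname{cl}(Xc) \setminus \operatorname{cl}(X) $ then $ c \in \operatorname{cl}(Xb) $—is the substantive point, and it uses strong minimality (not just minimality): in any elementary extension, every definable subset of $ S $ over given parameters is finite or cofinite, so there is a unique non-algebraic $ 1 $-type in $ S $ over each parameter set, which forces the symmetric behaviour required for exchange. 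Standard pregeometry theory then yields a well-defined dimension, namely the common cardinality of all maximal independent subsets of $ S $, equivalently the minimum $ |B| $ with $ S \subseteq \operatorname{acl}(\bar{a} \cup B) $.

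Finally, I would show that dimension is a complete invariant. Given $ \mathcal{M}_{1}, \mathcal{M}_{2} \models T $ of equal dimension with bases $ D_{1}, D_{2} $ of $ S^{\mathcal{M}_{i}} $, any bijection $ D_{1} \to D_{2} $ is elementary over $ \bar{a} $, because in a strongly minimal set any independent set is indiscernible (any two non-algebraic elements realize the same type over $ \bar{a} $). To lift this to an isomorphism $ \mathcal{M}_{1} \cong \mathcal{M}_{2} $, it suffices to show that each $ \mathcal{M}_{i} $ is both prime and minimal as a model containing $ \bar{a} \cup D_{i} $: primeness (from $ \omega $-stability) gives an elementary embedding $ \mathcal{M}_{1} \hookrightarrow \mathcal{M}_{2} $ extending the basis bijection, and minimality of $ \mathcal{M}_{2} $ forces this to be surjective. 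I expect minimality to be the main obstacle: it amounts to ruling out a Vaughtian pair—a proper elementary submodel of $ \mathcal{M}_{i} $ containing $ \bar{a} \cup D_{i} $ on which $ S $ has the same realizations—which is precisely the point at which uncountable categoricity (beyond mere $ \omega $-stability) is needed, since such a pair would, by a two-cardinal construction, produce two non-isomorphic models of some uncountable cardinality.
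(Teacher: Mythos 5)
The paper itself offers no proof of this statement; it is the classical Baldwin--Lachlan theorem, quoted with a citation to the original article. So your sketch can only be judged on its own merits. Its overall architecture---$\omega$-stability from $\aleph_{1}$-categoricity, a strongly minimal formula over the prime model, the $\operatorname{acl}$-pregeometry and its dimension, and then prime-and-minimal models over $\bar{a} \cup D$ with Vaughtian pairs ruled out by a two-cardinal argument---is the standard and correct route, and your identification of minimality over $\bar{a}\cup D_i$ as the point where categoricity (beyond $\omega$-stability) enters is exactly right.

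There is, however, a genuine gap in the second phase: the step ``strong minimality depends only on $\operatorname{tp}(\bar{b})$, and since the prime model is atomic, the parameters can be relocated to the prime model.'' Atomicity of the prime model says that every type it \emph{does} realize is isolated; it gives you no realization of $\operatorname{tp}(\bar{b})$ when that type is non-isolated, and nothing in your construction of $\bar{b}$ (descending through Morley ranks in the monster) guarantees isolation. To make the relocation work you would need something like definability of Morley rank---i.e., a formula $\theta(\bar{y}) \in \operatorname{tp}(\bar{b})$ already implying that $\varphi(x,\bar{y})$ is strongly minimal, so that density of isolated types could be applied---and that is a substantially later theorem, not a consequence of atomicity. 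The standard repair runs in the opposite direction: choose a formula over the prime model $\mathcal{M}_{0}$ of least (Morley rank, degree) among those with infinitely many realizations, observe that it is \emph{minimal} over $\mathcal{M}_{0}$, and then upgrade minimal to strongly minimal using the absence of Vaughtian pairs---the very machinery you already invoke in your final step. Two smaller points: producing $2^{\aleph_{1}}$ models from non-$\omega$-stability is more than you need and not how the usual argument goes (one contrasts a model realizing uncountably many types over a countable set with an Ehrenfeucht--Mostowski model realizing few); and ``primeness from $\omega$-stability'' should be spelled out as: prime models over sets exist in $\omega$-stable theories, and minimality of $\mathcal{M}_{i}$ over $\bar{a}\cup D_{i}$ then forces $\mathcal{M}_{i}$ to coincide with one.
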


Here, $ \varphi(\mathcal{M},\bar{a}) $ is the set of points in $ M $ defined by $ \varphi(x,\bar{a}) $, and the \emph{algebraic closure} of a set, $ C \subset M $, is $ \operatorname{acl}(C) = \bigcup \{\varphi(\mathcal{M},\bar{c}) \mid \bar{c} \subseteq C \text{ and } | \varphi(\mathcal{M},\bar{c}) | < \infty \} $, the union of all finite sets definable with parameters from $ C $.
A theory is \emph{strongly minimal} if $ x = x $ is strongly minimal (i.e., every definable subset of every model is finite or cofinite), and \emph{almost strongly minimal} if every model is algebraic over a strongly minimal set. Such theories are uncountably categorical.

Let us remark that the example in the following section has a theory that is uncountably categorical, but not totally categorical, not almost strongly minimal, and for which $ \mathcal{M} $ is not the saturated model. In fact, for an uncountably categorical structure to be a counterexample, these additional properties are necessary.

\begin{prop}
Suppose $ \mathcal{M} $ is a countable structure and $ T = \operatorname{Th}(\mathcal{M}) $ is uncountably categorical. Then if $ T $ is totally categorical, or more generally if $ \mathcal{M} $ is the countable, saturated model of $ T $, or if $ T $ is almost strongly minimal, then $ \operatorname{Aut}(\mathcal{M}) $ is locally (OB).
\end{prop}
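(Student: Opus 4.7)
The plan is to reduce each of the three sufficient conditions to one of two underlying principles: the theorem of Rosendal recalled in the introduction (that the countable saturated model of an $ \omega $-stable theory has the full property (OB)) and the elementary fact that a Polish group with an open compact subgroup is locally (OB), since compact sets are automatically relatively (OB).

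The totally categorical case reduces immediately to the saturated one: an $ \omega $-categorical theory in a countable language has a unique countable model, which is $ \omega $-saturated and therefore saturated. The saturated case is then Rosendal's theorem verbatim, since the full property (OB) implies local (OB).

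The substantive work is the almost strongly minimal case. Fix a strongly-minimal $ \varphi(x,\bar{a}) $ with $ \mathcal{M} \subseteq \operatorname{acl}(\bar{a} \cup D) $, where $ D = \varphi(\mathcal{M},\bar{a}) $. I would split on the dimension of $ D $ over $ \bar{a} $, which by Baldwin--Lachlan is the invariant classifying the countable models of $ T $. If this dimension is $ \aleph_{0} $, then $ \mathcal{M} $ is the countable saturated model of $ T $ and the saturated case applies. If it is finite, take a finite $ B \subseteq D $ with $ D \subseteq \operatorname{acl}(\bar{a} \cup B) $; then by almost strong minimality, $ \mathcal{M} = \operatorname{acl}(\bar{a} \cup B) $. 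Consequently every $ m \in \mathcal{M} $ has a finite orbit under the pointwise stabilizer $ V $ of the finite set $ \bar{a} \cup B $, so $ V $ embeds as a closed subgroup of the product over $ m \in \mathcal{M} $ of the finite symmetric groups on those orbits, and is compact. Since $ V $ is also open in $ \operatorname{Aut}(\mathcal{M}) $, the group is locally compact, and hence locally (OB).

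The step I expect to require the most care is the identification, in the infinite-dimensional subcase of almost strong minimality, of $ \mathcal{M} $ with the countable saturated model. This is standard but relies on the characterization, special to uncountably-categorical theories, that countable saturation is equivalent to the strongly minimal set carrying infinite dimension over its defining parameters---from which one recovers the realization of all types over finite parameter sets via algebraic closure in the almost strongly minimal setting.
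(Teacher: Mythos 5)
Your proposal is correct and follows essentially the same route as the paper: the first two cases are dispatched by Rosendal's theorem on saturated models of $\omega$-stable theories (the paper additionally cites Cameron for the $\omega$-categorical case), and in the almost strongly minimal case one finds a finite set $\bar{a}\cup B$ over which $\mathcal{M}$ is algebraic, so that its pointwise stabilizer is a compact open subgroup. The only cosmetic difference is that you split on the dimension of the strongly minimal set while the paper splits on whether $\mathcal{M}$ is saturated; these are equivalent by the same Baldwin--Lachlan analysis you invoke.
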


\begin{proof}
The case where $ \operatorname{Th}(\mathcal{M}) $ is $ \omega $-categorical is due to P. Cameron, and as mentioned above, was extended by Rosendal to saturated models of $ \omega $-stable theories. In both cases $ \operatorname{Aut}(\mathcal{M}) $ has the property (OB). So suppose that $ \mathcal{M} $ is not saturated, but that $ \operatorname{Th}(\mathcal{M}) $ is almost strongly minimal. Then there is a strongly minimal formula, $ \varphi(x,\bar{a}) $, and $ M = \operatorname{acl}(\varphi(\mathcal{M},\bar{a})) $. As $ \mathcal{M} $ is not saturated, there is a finite $ B \subseteq \varphi(\mathcal{M},\bar{a}) $ so that $ \varphi(\mathcal{M},\bar{a}) \subseteq \operatorname{acl}(\bar{a} \cup B) $.

Therefore, $ M = \operatorname{acl}(\bar{a} \cup B) $. Let $ V \leq \operatorname{Aut}(\mathcal{M}) $ be the stabilizer subgroup of $ \bar{a} \cup B $. Then $ V $ is an open subgroup and as $ \mathcal{M} $ is algebraic over $ \bar{a} \cup B $, every $ c \in M $ has a finite $ V $-orbit. Therefore (see \cite{Cameron01031996}), $ V $ is compact, and so $ \operatorname{Aut}(\mathcal{M}) $ is locally-compact, and thus locally (OB).
\end{proof}

We remark that all natural and frequently-cited examples of uncountably categorical structures possess at least one of the aforementioned properties, contributing to the plausibility of the conjecture refuted here.

\section{The example}
The example here, of an uncountably categorical structure whose automorphism group is not locally (OB), is essentially that of section 4 of \cite{JSL:9103821}, with some modifications. Therefore, in the proof of categoricity we will provide only an outline, referring the reader to the above paper for more details on that aspect.

Let $ \mathcal{L} = \{f,R,0,1 \} $, a language with a ternary function symbol, a binary relation symbol, and two constants. Let $ M = \mathbb{Q} \cup \mathbb{Q}^{2} $. Interpret the symbols $ 0 $ and $ 1 $ as the corresponding elements of $ \mathbb{Q} $, and let $ R^{\mathcal{M}}= \{(p,(p,q)) \in M^{2} \mid p,q \in \mathbb{Q} \} $.
We define $ f^{\mathcal{M}} $ by cases:
\begin{itemize}
\item $ f^{\mathcal{M}}(p,q,r) = (q-p) + r $ for $ p,q,r \in \mathbb{Q} $
\item $ f^{\mathcal{M}}((s,p),(s,q),(s,r)) = (s, (q-p)+r) $ for $ p,q,r,s \in \mathbb{Q} $
\item $ f^{\mathcal{M}}(p,(p,q),r) = (p,(q-p)+r) $ for $ p,q,r \in \mathbb{Q} $
\item $ f^{\mathcal{M}}(a,b,c) = c $ if $ (a,b,c) $ is not one of the above forms
\end{itemize}

One should keep in mind the following picture of $ \mathcal{M} $:  It consists of a ``parent'' copy of $ \mathbb{Q} $, and corresponding to each of its elements, a ``child'' copy of $ \mathbb{Q} $. The elements $ 0 $ and $ 1 $ of the parent copy are distinguished, and the relation $ R^{\mathcal{M}}(a,b) $ holds precisely when $ a $ is a member of the parent copy and $ b $ is a member of the child copy associated to $ a $.

The function $ f^{\mathcal{M}} $ is best considered not as a three-variable function, but as a family of single-variable functions parameterized by pairs of elements of $ \mathcal{M} $. That is, $ f^{\mathcal{M}}(a,b,c) $ should be viewed as the value that $ c $ takes in the function determined by $ (a,b) $. So the first condition says that if $ a $ and $ b $ are both in the parent copy of $ \mathbb{Q} $, then $ f^{\mathcal{M}}(a,b,\cdot) $ acts as a translation of the parent copy by $ (b-a) $, and as the identity on the child copies. Similarly if $ a $ and $ b $ are in the same child copy, then $ f^{\mathcal{M}}(a,b,\cdot) $ translates that child copy. The third case is probably the least intuitive, but if $ c $ is in the parent copy and $ R^{\mathcal{M}}(a,b) $, then $ f^{\mathcal{M}}(a,b,c) $ is best described as ``where $ c $ would go if the parent copy was laid on top of the child copy corresponding to $ a $, in such a way that $ a $ was made to line up with $ b $''.

\begin{prop} [Baldwin-Lachlan] \label{categorical}
$ \operatorname{Th}(\mathcal{M}) $ is uncountably categorical.
\end{prop}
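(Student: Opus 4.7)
The plan is to invoke the Baldwin-Lachlan theorem: I will exhibit a strongly minimal formula over the prime model and argue that two uncountable models of the same cardinality necessarily have equal dimension in that formula, hence are isomorphic. The candidate is $\varphi(x) := \exists y\, R(x,y)$, whose interpretation in any model is the ``parent'' copy $P$; since $0, 1$ are named constants, no additional parameters are needed.

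First I would set $a \oplus c := f(0, a, c)$ to equip $P$ with a definable abelian group operation, under which $P$ becomes, in $\mathcal{M}$, the additive group $(\mathbb{Q}, +)$ with distinguished element $1$. This reduct is strongly minimal, coinciding with the theory of torsion-free divisible abelian groups with a named nonzero element. The substance of this step, and the main obstacle of the proof, is to verify that the full $\mathcal{L}$-induced structure on $P$ contributes nothing beyond this reduct, so that no $\mathcal{L}$-formula---possibly with parameters from the child copies---defines a new subset of $P$. This is where the four clauses of $f^{\mathcal{M}}$ are essential: an atomic subformula $R(s, x)$ with $x$ constrained to $P$ is false, since child copies are disjoint from $P$; and a term $f(a, b, c)$ whose arguments mix sorts either takes its value outside $P$ or falls into the default identity-in-$c$ clause. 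Hence when all variables are restricted to $P$, the theory on $P$ reduces to that of a $\mathbb{Q}$-vector space, whose definable subsets are finite or cofinite.

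Next, I would show that every $\mathcal{N} \models T$ satisfies $|N| = |P(\mathcal{N})|$ when infinite. The axioms of $T$ imply that for each $p \in P$ the child $C_p := \{y : R(p,y)\}$ is non-empty and, for any $q \in C_p$, that the map $r \mapsto f(p, q, r)$ is a bijection $P \to C_p$ (both injectivity and surjectivity being expressible by first-order sentences). Since $N = P \sqcup \bigsqcup_{p \in P} C_p$ and $|C_p| = |P|$ for each $p$, we obtain $|N| = |P|$ for infinite $N$; in particular, any model of uncountable cardinality $\kappa$ has $\mathbb{Q}$-vector space dimension $\kappa$ in its parent.

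Finally, for two models $\mathcal{N}_1, \mathcal{N}_2$ of the same uncountable cardinality, I would construct an isomorphism by first choosing a $\mathbb{Q}$-linear bijection $\alpha : P(\mathcal{N}_1) \to P(\mathcal{N}_2)$ fixing $1$, and then extending to each child through an arbitrary compatible choice of basepoints $q_p \in C_p^{\mathcal{N}_1}$ and $q'_{\alpha(p)} \in C_{\alpha(p)}^{\mathcal{N}_2}$: the rule $f(p, q_p, r) \mapsto f(\alpha(p), q'_{\alpha(p)}, \alpha(r))$ defines a bijection $C_p^{\mathcal{N}_1} \to C_{\alpha(p)}^{\mathcal{N}_2}$. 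Together these assemble into an $\mathcal{L}$-isomorphism, because each child is definably a $P$-torsor whose full structure is recovered from $P$ and any single basepoint; the verification reduces to a direct case analysis on the four clauses of $f^{\mathcal{M}}$. The details follow section 4 of \cite{JSL:9103821}, with only minor adaptations to accommodate the constants $0, 1$.
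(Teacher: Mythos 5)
Your proposal is correct and follows the same overall architecture as the paper's proof: the parent set $P$ defined by $\exists y\, R(x,y)$ is the strongly minimal set, each child is in definable bijection with $P$ via $f(p,q,\cdot)$ so that $|N|=|P(\mathcal{N})|$, and an isomorphism of the parents is extended fiberwise through chosen basepoints by exactly the rule you write. The one step where you genuinely diverge is strong minimality of the parent: the paper works in the constant-free reduct and proves from scratch that $(\mathbb{Q},F)$ is strongly minimal, by showing every term evaluates to an integer affine combination with coefficients summing to $1$ and hence every definable set is a Boolean combination of affine conditions; you instead use the named constant $0$ to define $a\oplus c=f(0,a,c)$ and quote the classical strong minimality of divisible torsion-free abelian groups. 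Both work; yours is shorter at the cost of leaning on a known theory. One substantive comment: the step you single out as ``the main obstacle''---that no $\mathcal{L}$-formula with parameters from the children defines a new subset of $P$---is not actually needed for the way you (and the paper) finish. Since you build the isomorphism directly, by taking any $\mathbb{Q}$-linear bijection of the parents fixing $1$ and extending it fiberwise, any additional structure the ambient model might induce on $P$ is preserved automatically once the extension is verified to be an $\mathcal{L}$-isomorphism of the whole models; what you need is only strong minimality (hence uncountable categoricity) of the standalone structure $(P,\oplus,1)$, not of the formula $\varphi$ inside $\operatorname{Th}(\mathcal{M})$. Finally, in the closing verification, phrase the case analysis in terms of first-order consequences of the four clauses true in $\mathcal{M}$ rather than the clauses themselves, since in an arbitrary model $f$ is not literally given by that definition; this is the same caveat the paper signals with its appeal to ``first-order properties of $\mathcal{M}$ true in the $\mathcal{N}_i$.''
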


\begin{proof}
First, note that it suffices to show that $ \mathcal{M}' $, the reduct of $ \mathcal{M} $ to the language $ \mathcal{L}' = \{f,R\} $, is uncountably categorical. Next, we see that the structure $ (\mathbb{Q},F) $ where $ F(p,q,r) = (q-p) + r $ is strongly minimal. To see this, first verify, by induction, that for every term $ t(x_{1},\dots,x_{n}) $ there are $ r_{1},\dots,r_{n} \in \mathbb{Z} $, summing to $ 1 $, so that $ (\mathbb{Q},F) $ interprets $ t(a_{1},\dots,a_{n}) $ as $ r_{1}a_{1}+ \cdots + r_{n}a_{n} $ for every choice of $ a_{1},\dots,a_{n} $. Therefore, for every atomic formula $ \varphi(x_{1},\dots,x_{n}) $ there are $ r_{1},\dots,r_{n} \in \mathbb{Z} $, summing to $ 0 $, so that $ (\mathbb{Q},F) \models \varphi(a_{1},\dots,a_{n}) $ if and only if $ r_{1}a_{1}+ \cdots + r_{n}a_{n} = 0 $. Then by induction on the construction of formulas, every $ \emptyset $-definable relation in $ (\mathbb{Q},F) $ is a Boolean combination of sets of the form
\[ \{(a_{1},\dots,a_{n}) \in \mathbb{Q}^{n} \mid r_{1}a_{1}+ \cdots + r_{n}a_{n} = 0 \} \]
where the $ r_{i} $'s sum to $ 0 $. So for every $ \varphi(x_{1},\dots,x_{n}) $, there is a $ k_{\varphi} \in \mathbb{N} $ so that for any $ a_{2},\dots,a_{n} $, either $ \varphi(x_{1},a_{2},\dots,a_{n}) $ or $ \neg \varphi(x_{1},a_{2},\dots,a_{n}) $ has at most $ k_{\varphi} $ solutions. This fact is expressible in a first-order manner, and so in every model of $ \operatorname{Th}(\mathbb{Q},F) $, every set defined by $ \varphi $ with parameters is either of size less than $ k_{\varphi} $ or has compliment with this bound. Therefore, $ \operatorname{Th}(\mathbb{Q},F) $ is strongly minimal.

Hence, $ \psi(x) = \exists y R(x,y) $ is a strongly minimal formula in $ \operatorname{Th}(\mathcal{M}') $. Suppose $ \mathcal{N}_{1} $ and $ \mathcal{N}_{2} $ are $ \aleph_{1} $-models of $ \operatorname{Th}(\mathcal{M}') $. In $ \mathcal{M} $, for every $ a$ and $b $ with $ R^{\mathcal{M}}(a,b) $, the restriction of $ f^{\mathcal{M}}(a,b,\cdot) $ to $ \psi(\mathcal{M}) $ is one-to-one and onto $ R(a,\mathcal{M}) $---in fact, it is an isomorphism of $ \{f\} $-structures. This is expressible in $ \mathcal{L}' $, and is therefore known to $ \operatorname{Th}(\mathcal{M}) $, and consequently each $ \mathcal{N}_{i} $ is a $ (|\psi(\mathcal{N}_{i})| +1) $-sized union of sets of size $ |\psi(\mathcal{N}_{i})| $, and so $ |\psi(\mathcal{N}_{i})| = \aleph_{1} $. Therefore, as the $ \psi(\mathcal{N}_{i}) $'s are models of the uncountably-categorical theory, $ \operatorname{Th}(\mathbb{Q},F) $, there is an $ \{ f \} $-isomorphism $ g: \psi(\mathcal{N}_{1}) \to \psi(\mathcal{N}_{2}) $.
Extend $ g $ to all of $ \mathcal{N}_{1} $ by choosing, for each $ a \in \psi(\mathcal{N}_{1}) $, a point $ c_{a} $ with $ R^{\mathcal{N}_{1}}(a,c_{a}) $, and likewise for each element of $ \psi(\mathcal{N}_{2}) $. Then if $ d \in \mathcal{N}_{1} \setminus \psi(\mathcal{N}_{1}) $, say if $ R^{\mathcal{N}_{1}}(a,d) $, let $ g(d) = f^{\mathcal{N}_{2}}(g(a),c_{g(a)},g((f^{\mathcal{N}_{1}})^{-1}(a,c_{a},d))) $, where $ (f^{\mathcal{N}_{1}})^{-1}(a,c_{a},d) $ denotes the (unique) element, $ b $, of $ \psi(\mathcal{N}_{1}) $, for which $ f^{\mathcal{N}_{1}}(a,c_{a},b) = d $. One then verifies that this extension of $ g $ is an $ \mathcal{L}' $-isomorphism, by again appealing to first-order properties of $ \mathcal{M} $ true in the $ \mathcal{N}_{i} $.
\end{proof}

So $ \operatorname{Th}(\mathcal{M}) $ is uncountably categorical, and one can easily see that $ \mathcal{M} $ is the prime model. 

\begin{prop} \label{not locally (OB)}
$ \operatorname{Aut}(\mathcal{M}) $ is not locally (OB). That is, the coarse structure associated to the relatively (OB) subsets of $ \operatorname{Aut}(\mathcal{M}) $ cannot be given by a metric.
\end{prop}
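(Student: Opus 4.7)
My plan is to identify $\operatorname{Aut}(\mathcal{M})$, as a topological group, with the direct product $\prod_{p \in \mathbb{Q}} \mathbb{Q}$, where each factor carries the discrete topology, and then invoke Lemma \ref{products}. Since the discrete group $\mathbb{Q}$ does not have property (OB)---the absolute-value metric is a continuous, left-invariant metric of infinite diameter---and there are infinitely many such factors, the lemma will immediately yield that the product, and hence $\operatorname{Aut}(\mathcal{M})$, is not locally (OB).

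First I would show that every $g \in \operatorname{Aut}(\mathcal{M})$ restricts to the identity on the parent copy of $\mathbb{Q}$. The set $P = \{x : \exists y\, R(x,y)\}$ is the parent copy (every parent element has at least one child in $\mathcal{M}$) and so is preserved setwise by $g$. On $P$, the operation $f$ acts by $f(p,q,r) = (q-p) + r$; the automorphism condition $g(f(p,q,r)) = f(g(p), g(q), g(r))$, specialized at $p = 0$ and using $g(0) = 0$, yields additivity of $g \restriction P$, and combined with $g(1) = 1$ this forces $g \restriction P = \operatorname{id}_{\mathbb{Q}}$.

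Next, for each $p \in \mathbb{Q}$, the child copy $R(p,\mathcal{M})$ is $g$-invariant, so I can write $g((p,q)) = (p, \sigma_p(q))$ for some bijection $\sigma_p$ of $\mathbb{Q}$. The third clause in the definition of $f^{\mathcal{M}}$, together with $g \restriction P = \operatorname{id}$, gives $\sigma_p((q-p)+r) = (\sigma_p(q)-p)+r$ for all $q, r \in \mathbb{Q}$; specializing to $q = p$ yields $\sigma_p(r) = r + c_p$, where $c_p := \sigma_p(p) - p$. Conversely, a quick run through the four defining cases of $f^{\mathcal{M}}$ confirms that for any choice $(c_p)_{p \in \mathbb{Q}} \in \mathbb{Q}^{\mathbb{Q}}$ the induced bijection is an automorphism. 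This produces a group isomorphism $\operatorname{Aut}(\mathcal{M}) \cong \prod_{p \in \mathbb{Q}} \mathbb{Q}$.

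For the topology, basic open neighborhoods of the identity in $\operatorname{Aut}(\mathcal{M})$ are pointwise stabilizers of finite tuples from $M$; since parent elements are fixed automatically, such a stabilizer is exactly the set of $(c_p)$ with $c_p = 0$ on the finitely many $p$ indexing the child-copy entries of the tuple. This is precisely the product topology on $\prod_{p \in \mathbb{Q}} \mathbb{Q}$ with discrete factors, so Lemma \ref{products} applies and completes the proof. The main effort is the case analysis identifying the group and its topology; no individual step is deep, and the principal obstacle is just keeping the arithmetic in the clauses defining $f^{\mathcal{M}}$ straight.
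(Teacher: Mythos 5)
Your proposal is correct and follows essentially the same route as the paper: identify $\operatorname{Aut}(\mathcal{M})$ topologically with the product $\prod_{p\in\mathbb{Q}}\mathbb{Q}$ of discrete copies of $\mathbb{Q}$ and apply Lemma~\ref{products}. The only cosmetic difference is that you deduce pointwise fixing of the parent copy from additivity of $g\restriction P$ together with $g(0)=0$, $g(1)=1$, whereas the paper phrases the same fact as definability of each rational over $\{0,1\}$; the fiber computation and the converse verification are as in the paper.
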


\begin{proof}
Observe that every element of the structure $ (\mathbb{Q},F) $ introduced in the proof of Proposition \ref{categorical} is definable over $ \{0,1\} $. For this, let $ F_{(a,b)} $ denote $ F(a,b,\cdot) $, and observe that for $ n \in \mathbb{N} $,
\[ n = F_{(0,1)}^{n}(0) \text{ and } -n = F_{(1,0)}^{n}(0), \]
while $ \frac{k}{n} \in \mathbb{Q} $ ($ n \geq 1 $) is the unique $ x $ for which,
\[ k = F_{(0,x)}^{n}(0). \]
Therefore, every automorphism of $ (\mathbb{Q},F) $ is determined by where it sends $ 0 $ and $ 1 $.

Now suppose $ g \in \operatorname{Aut}(\mathcal{M}) $. Then as every point in the strongly minimal set, $ \psi(\mathcal{M}) $, is definable over $ \emptyset $ (recall $ \mathcal{L} $ contains symbols for $ 0 $ and $ 1 $), $ g $ must fix $ \psi(\mathcal{M}) $ pointwise, and so for every $ a \in \psi(\mathcal{M}) $, fixes $ R(a,\mathcal{M}) $ setwise. Let $ g_{a}: \mathbb{Q} \to \mathbb{Q} $ be the automorphism of $ (\mathbb{Q},F) $ induced by the action of $ g $ on $ \psi(a,\mathcal{M}) $, satisfying $ g(a,b)=(a,g_{a}(b)) $.

Recall that $ f^{\mathcal{M}}(p,(p,q),r) = (p,(q-p)+r) $. So for any $ x \in \mathbb{Q} $,
\begin{align*}
(a,g_{a}(x)) &= g(a,x) \\
&= g(a,(0-a)+(x+a)) \\
&= g(f^{\mathcal{M}}(a,(a,0),x+a)) \\
&= f^{\mathcal{M}}(g(a),g(a,0),g(x+a)) \\
&= f^{\mathcal{M}}(a,(a,g_{a}(0)),x+a) \text{ (since } g \upharpoonright \psi(\mathcal{M}) = \operatorname{id} ) \\
&= (a,(g_{a}(0) - a ) + (x+a)) \\
&= (a,x + g_{a}(0))
\end{align*}
and $ g_{a} $ is a translation by $ g_{a}(0) $.

So every $ g \in \operatorname{Aut}(\mathcal{M}) $ fixes $ \psi(\mathcal{M}) $ and restricts to a translation on each fiber, $ R(a,\mathcal{M}) $. In this way, it can be naturally identified with a point in $ \mathbb{Q}^{\mathbb{Q}} $ given by $ (g_{a}(0))_{a \in \mathbb{Q}} $. Conversely, suppose $ h \in \mathbb{Q}^{\mathbb{Q}} $. Let $ \widehat{h}: M \to M $ fix $ \psi(\mathcal{M}) $ and send $ (p,q) \mapsto (p,q + h(p)) $. Then $ \widehat{h} $ is easily seen to respect $ R,0,1 $, and the first, second, and fourth parts of the definition of $ f $, while for the third,
\begin{align*}
\widehat{h}(f^{\mathcal{M}}(p,(p,q),r)) &= \widehat{h}(p,(q-p)+r) \\
&= (p,(q-p)+r+h(p)) \\
&= f^{\mathcal{M}}(p,(p,q+h(p)),r) \\
&= f^{\mathcal{M}}(\widehat{h}(p),\widehat{h}(p,q),\widehat{h}(r)),
\end{align*}
and $ \widehat{h} \in \operatorname{Aut}(\mathcal{M}) $. So $ \operatorname{Aut}(\mathcal{M}) $ can be identified with $ \mathbb{Q}^{\mathbb{Q}} $, and as a basic open set in $ \operatorname{Aut}(\mathcal{M}) $ is determined by its action on finitely-many points (i.e., fixes the values of $ h(a) $ for finitely-many $ a $), they are isomorphic as topological groups when the base $ \mathbb{Q} $ is given the discrete group topology.

Therefore, $ \operatorname{Aut}(\mathcal{M}) $ is an infinite product of groups that are not (OB), and so is not locally (OB), by Lemma \ref{products}.
\end{proof}

\nocite{MR1066691}

\bibliographystyle{amsalpha}
\bibliography{UncountablyCategoricalLSG}

\end{document}